\numberwithin{equation}{section}
\begin{document}

\title[Whittaker Plancherel measure on GL($n$)]{On the Determination of the Plancherel measure for Lebedev-Whittaker transforms on GL($n$)}
\author{Dorian Goldfeld}
\email{goldfeld@math.columbia.edu}
\address{Columbia University, New York, NY 10027}
\thanks{D.G. is partially supported by  NSF grant DMS-1001036}
\author{Alex Kontorovich}
\email{alexk@math.sunysb.edu}
\address{Stony Brook University, Stony Brook, NY 11794}
\thanks{A.K. is partially supported by NSF grants DMS-1064214 and DMS-1001252.}
\date{\today}
\dedicatory{Dedicated to Andrzej Schinzel on the occasion of his 75th birthday}
\maketitle

\section{Introduction}

The classical Plancherel formula states that the inner product of two functions is the same as the inner product of their (Fourier) transforms.
This fact has been vastly generalized, and the measure appearing on the transform side is called the Plancherel measure.
One of Harish-Chandra's great achievements was the determination of the Plancherel measure for reductive Lie groups
%An
% rigorous 
%exposition of Harish-Chandra's construction of Plancherel measure for real groups is given in
(see e.g. Wallach  \cite%[\S15.10.3]
{Wallach1992}).
%The recent work of Sakellaridis-Venkatesh \cite{SakellaridisVenkatesh2010} gives a Plancherel formula for spherical varieties of split groups over $p$-adic fields.
%, a program for developing a Plancherel formula for spherical varieties of split groups over $p$-adic fields is developed.
%\\

One type of Plancherel measure for real groups comes from the Lebedev-Whittaker transform, the earliest version of which is
the Kontorovich-Lebedev transform, 
see  \cite{KontorovichLebedev1938, KontorovichLebedev1939}.
The original transform, 
a type of index transform involving modified Bessel functions, was introduced
to solve certain boundary-value problems. 
It has been useful in many applications in modern analytic number theory (see e.g. \cite{IwaniecKowalski2004}), since
it can be characterized  as a Whittaker transform on $\GL(2).$ As such it has a natural generalization to reductive Lie groups.
%A generalization of the Kontorovich-Lebedev transform for real reductive groups 
%(see \cite[\S15.10.3]{Wallach1992}). 
%\\

The main aim of this paper is to obtain a very concrete and explicit version of the
Plancherel measure for the
 %Kontorovich-
Lebedev-Whittaker transform for the real group $\GL(n, \R)$ with $n \ge 2.$ We expect  that such a realization will be useful 
for analytic methods for number theory on higher rank groups.
%in the future. 
\\

For $n \ge 2$, consider an admissible irreducible cuspidal automorphic representation  $\pi$ for GL$(n,\mathbb A)$, where $\mathbb A$ is the adele group over $\Q$.  By Flath's tensor product  theorem \cite{Flath1979}, 
$$
\pi = \otimes \pi_v,
$$ 
where the tensor product goes over irreducible, admissible, unitary local representations of $GL(n, \Q_p).$ We shall assume that $\pi$ is unramified at infinity.

To characterize the real components of such representations in a more explicit manner,  we introduce,
for $n \ge 2$,  the generalized upper half plane
 $$
\frak h^n := GL(n, \R)/\left(O(n, \R) \cdot \R^\times\right)
.
$$ 
Let $\frak D^n$ denote the algebra of $GL(n, \R)$--invariant differential operators acting on $\frak h^n$. By the Iwasawa decomposition, every $z \in \frak h^n$ may be uniquely written in the form $z = xy$ with $x \in U_n(\R)$ (the group of unipotent upper triangular matrices in $GL(n, \R)$) and $y$ a diagonal matrix  of the form
 \begin{equation} y = \begin{pmatrix} y_1\cdots y_{n-1} & & &\\
& \ddots & &\\
& & y_1 &\\
& & & 1 \end{pmatrix}, \qquad  \big(y_i > 0 \; \text{for} \; i = 1, \ldots,n-1\big). \label{y}\end{equation}
Whenever we write $z = xy \in \frak h^n$ we assume that $x,y$ are as described above.

 Let $W_\infty$ be a Whittaker model for $\pi_\infty$. Then there exists a {\it spherical Whittaker function} $W \in W_\infty$ which is $K_\infty$--fixed for the maximal compact subgroup  $K_\infty = O(n, \Bbb R).$ Then $W:\frak h^n \to \Bbb C$ is characterized by the fact that $W$ is an eigenfunction of  $\frak D^n$, and in addition,
$$W(uz) = \psi(u) \cdot W(z), \qquad (z \in \frak h^n),$$
for any $u \in U_n(\R)$ and some fixed character $\psi$ of $U_n(\R).$ Associated to $\pi_\infty$, there will exist spectral parameters $\nu =(\nu_1, \ldots \nu_{n-1}) \in \C^{n-1}$  so that we may write (see \cite[\S5.9]{Goldfeld2006} for the {\it completed} Jacquet-Whittaker function, which is used exclusively throughout this paper) 
\begin{equation} 
W(z) =
 \prod_{j=1}^{n-1}
\prod_{j\le k\le n-1}
\pi^{-\foh-v_{j,k}}
\G\left(\foh + v_{j,k}\right)
\cdot
%\left(
 \int_{U_n(\R)} I_{\nu}(w_n u z) \psi(u) \, d^\times u
%\right)
, \qquad (z   \in \frak h^n)
.
\label{WhittakerFunction}
\end{equation}
Here $\G$ is the Gamma function, $w_n$ is the long element of the Weyl group, the $I$-function is given by 
$$%\begin{equation}
I_\nu(z) = \prod_{i=1}^{n-1} \prod_{j=1}^{n-1} y_i^{b_{ij}\cdot\nu_j}, \qquad \left(z = xy \in \frak h^n\right),
$$%\end{equation}
with
\begin{equation}
b_{ij} = \begin{cases} ij, & \text{if $i+j \le n,$} \\
(n-i)(n-j), & \text{if $i + j \ge n,$} \end{cases}
\label{eq:bijDef}
\end{equation} 
and
$$
v_{j,k}=
\sum_{i=0}^{j-1}
{n\nu_{n-k+i}-1\over2}
.
$$
There should be no confusion between the real number $\pi=3.14\dots$ in \eqref{WhittakerFunction}, and the representation $\pi$.

Since we assumed that the local representation $\pi_\infty$ is tempered, it follows that 
\be\label{eq:nuTot}
\nu_j = 1/n + it_j
\ee 
with $t_j \in \R$  $\;(j = 1, \ldots,n-1).$ In this case we define $W_{it} := W$ where $W$ is given by \eqref{WhittakerFunction} and $t = (t_1, \ldots, t_{n-1})$.
The Haar measure on the Levy component is given by
$$
d^\times y = \prod\limits_{k=1}^{n-1} y_k^{-k(n-k)} \, \frac{dy_k}{y_k}.
$$ 

\begin{definition}[Lebedev-Whittaker transform]\label{def:1.1}  {\it Let $f:\R_+^{n-1} \to \C$, let $y$ be as in \eqref{y}, and let $t = (t_1, \ldots, t_{n-1}) \in \R^{n-1}$. 
Then we define the Lebedev-Whittaker transform  $f^\#:\R_+^{n-1} \to \C$ by
$$f^\#(t) := \int_{\R_+^{n-1}} f(y) W_{it}(y) \; d^\times y,$$
provided the above integral converges absolutely.  }
\end{definition} 

  The  inverse  transform  is given in the next theorem. 
Let $\alpha=(\ga_{1},\dots,\ga_{n})\in\R^{n}$ be linear functions of $t\in\R^{n-1}$ defined  as follows. Recall $b_{k\ell}$ defined in \eqref{eq:bijDef}. For $1 \le k \le n-1$, the $\alpha_k$ are determined by (see (5.9.7) in \cite{Goldfeld2006})
 \be\label{eq:gaDefN}
 \frac{k(r-k)}{2} \; + \; \sum_{\ell=1}^{r-k} \frac{\alpha_\ell}{2} \; = \; \sum_{\ell=1}^{n-1} b_{k \ell}\cdot \nu_\ell
 \ee
  and
  $$\alpha_n = - \sum_{k=1}^{n-1} \alpha_k.$$   
  \begin{theorem}[Lebedev-Whittaker inversion] 
  \label{thm:main}
  Let $f:\R_+^{n-1}\to\C$ be smooth of compact support, and $f^\#:\R_+^{n-1}\to\C$ be given as in Definition \ref{def:1.1}. Then
  $$
  f(y) = 
{1\over \pi^{n-1}}
  \int_{\R^{n-1}} f^\#(t) W_{-it}(y)
  \frac{dt}{\prod_{1\le k \ne \ell\le n} \Gamma\left(\frac{\alpha_k - \alpha_\ell}{2}  \right)}  %dt
.
  $$ 
 \end{theorem}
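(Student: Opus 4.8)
The plan is to argue by induction on $n$, using the recursive branching structure of the spherical Whittaker function, with a Barnes-type special-function evaluation as the final ingredient. As a first reduction, observe that since $f$ is smooth of compact support, $f^\#(t)$ decays faster than any polynomial in $t$ --- integrate by parts (with respect to $d^\times y$) against the operators of $\frak D^n$, of which $W_{it}$ is an eigenfunction with polynomial-in-$t$ eigenvalues --- while $W_{-it}(y)$ is of polynomial growth in $t$. Hence, after inserting a Gaussian regulator $e^{-\varepsilon\|t\|^2}$ into the $t$-integral and sending $\varepsilon\to0^{+}$ at the end, Fubini's theorem rewrites the claimed right-hand side as $\int_{\R_+^{n-1}} f(u)\,K(u,y)\,d^\times u$, where
\[
K(u,y):=\frac{1}{\pi^{n-1}}\int_{\R^{n-1}} W_{it}(u)\,W_{-it}(y)\,
\frac{dt}{\prod_{1\le k\ne\ell\le n}\Gamma\!\left(\tfrac{\alpha_k-\alpha_\ell}{2}\right)}
\]
(interpreted distributionally). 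It thus suffices to prove that $K(u,y)$ is the Dirac mass at $u=y$ for the measure $d^\times u$.

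The base case $n=2$ is the classical Kontorovich--Lebedev inversion formula for $K_{it}$, whose Plancherel density is $\bigl(\pi\,\Gamma(it)\Gamma(-it)\bigr)^{-1}$ once one accounts for the normalization in \eqref{WhittakerFunction} and the linear change of variables $t\mapsto\alpha$ from \eqref{eq:gaDefN}. For the inductive step I would insert the integral representation expressing the $\GL(n)$ completed Whittaker function $W_{it}$ as an explicit transform of a $\GL(n-1)$ completed Whittaker function against a kernel assembled from $K$-Bessel functions and Gamma factors --- the iterated Jacquet integral of \cite[\S5.9]{Goldfeld2006}, peeled off one torus coordinate at a time. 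Substituting this into Definition~\ref{def:1.1} and separating $y_{n-1}$ from $y_1,\dots,y_{n-2}$ exhibits $f\mapsto f^\#$ as the composition of the $\GL(n-1)$ Lebedev--Whittaker transform (in $y_1,\dots,y_{n-2}$) with a one-dimensional Kontorovich--Lebedev-type index transform (in $y_{n-1}$); applying the inductive hypothesis and the $n=2$ inversion in the right order reconstructs $f(y)$, with the $\GL(n)$ density emerging as the product of the $\GL(n-1)$ density and the new one-dimensional density. (Alternatively, one can bypass the recursion by taking Mellin transforms in all of $y_1,\dots,y_{n-1}$, substituting the Mellin--Barnes representation of $W_{it}$, and checking that $f\mapsto f^\#$ followed by the claimed inverse is the identity on Mellin transforms.)

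Either route reduces the theorem to the same special-function identity, which is where I expect the main difficulty to lie: one must show that the relevant $t$-integral of a ratio of products of Gamma functions collapses to the symbol $1$ of the identity operator \emph{precisely} when the density is $\pi^{-(n-1)}\prod_{1\le k\ne\ell\le n}\Gamma\!\left(\tfrac{\alpha_k-\alpha_\ell}{2}\right)^{-1}$, thereby simultaneously establishing inversion and pinning down the measure. In the recursive organization this $(n-1)$-fold integral factors into a chain of one-dimensional Barnes integrals --- each an instance of the first or second Barnes lemma --- so that the Gamma factors telescope between levels $j$ and $j-1$; the bookkeeping of exactly which factors cancel, together with a Gustafson-type multidimensional beta integral needed to close the induction for general $n$, is the real content. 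The remaining points are the justifications deferred above --- rapid decay of $f^\#$, polynomial growth of $W_{-it}$ in $t$, the $\varepsilon\to0^{+}$ limit, and a Stirling-bound check that every contour shift in the Barnes-lemma evaluations crosses no poles and converges. Finally, one may invoke Wallach's Whittaker--Plancherel theorem \cite{Wallach1992} to know in advance that \emph{some} continuous Plancherel density exists, which reduces the entire problem to identifying it, i.e.\ to exactly the Barnes-type computation just described.
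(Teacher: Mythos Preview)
Your strategy is genuinely different from the paper's, and the inductive step as you describe it has a real gap. The paper does \emph{not} proceed by induction on $n$ or by peeling off one torus coordinate via a Whittaker recursion. Instead it regularizes the inner $y$-integral by inserting $\prod_j y_j^{(n-j)\varepsilon}$, interchanges with the spectral integral, and then applies Stade's formula \eqref{stadeGn}, which evaluates
\[
\int_{\R_+^{n-1}} W_{it}(y)\,W_{iu}(y)\,\prod_{j=1}^{n-1} y_j^{(n-j)\varepsilon}\,d^\times y
\;=\;
\frac{1}{2\pi^{\varepsilon n(n-1)/2}\,\Gamma(n\varepsilon/2)}
\prod_{j,k=1}^{n}\Gamma\!\left(\frac{\varepsilon+\alpha_j+\beta_k}{2}\right)
\]
in closed form. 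This collapses the problem to a purely spectral integral of $H$ against a ratio of Gamma functions; one then changes to $\alpha'$-variables, shifts the $\alpha'$-contours to the left, and picks up the Gamma poles. By the Weyl-group symmetry of $H$ the $n!$ residues are equal, each contributing $\tfrac{1}{n!}H(t)$ in the limit $\varepsilon\to0$, while every surviving shifted integral carries a factor $\Gamma(n\varepsilon/2)^{-1}$ in the denominator and hence dies. That is the entire mechanism.

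The gap in your plan is the claim that, after inserting the $\GL(n)\!\to\!\GL(n-1)$ integral representation for $W_{it}$, the map $f\mapsto f^\sharp$ factors as the $\GL(n-1)$ Lebedev--Whittaker transform in $(y_1,\dots,y_{n-2})$ composed with a one-variable index transform in $y_{n-1}$. It does not: in the standard recursions (Jacquet, Stade, Vinogradov--Takhtajan, Givental) the arguments of the $\GL(n-1)$ Whittaker function are nontrivial mixtures of the $y_j$ and the new integration variables, and all $n-1$ spectral parameters remain entangled, so the $y$-integrations do not decouple into a $\GL(n-1)$ block and a scalar block. Correspondingly, the $\GL(n)$ Plancherel density $\prod_{k\neq\ell}\Gamma\!\bigl(\tfrac{\alpha_k-\alpha_\ell}{2}\bigr)^{-1}$ is not the $\GL(n-1)$ density times a single $|\Gamma(it)|^{-2}$ factor; the passage from $n-1$ to $n$ introduces $2(n-1)$ new Gamma factors simultaneously. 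Your parenthetical Mellin--Barnes alternative is much closer to viable, but the ``Barnes-type identity'' you would need in order to evaluate the resulting $y$-integral is exactly Stade's formula (itself a multidimensional Barnes lemma), which you never invoke. Once Stade is on the table, the recursion and the Gustafson-type bookkeeping become unnecessary: the paper's contour-shift argument finishes the job directly.
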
 

\begin{rmk}
The above inversion formula is not new \cite{Wallach1992}, but the novelty of 
  our approach is its explicit presentation and derivation.
  Our proof uses only complex analysis (the residue theorem) and the location of poles and residues of the Gamma function.
Admittedly, it relies crucially on Stade's \cite{Stade2002} formula (see \S\ref{recall} below), but this is again a vast generalization of Barnes' Lemma.  We expect our methods to have other applications in higher rank analytic number theory. 
\end{rmk}
\begin{rmk}
As we are mainly interested in the Plancherel measure,
we restrict our attention  to smooth functions of compact support. Of course the inversion holds for a much wider class of test functions.
\end{rmk}
  
  As a consequence, we have the Plancherel formula for the Lebedev-Whittaker transform.
\begin{cor}
For $f_{1},f_{2}:\R_{+}^{n-1}\to\C$, smooth of compact support, we have
\bea
\label{KLPGLn}
\<f_1,f_2\>
&=&
\int_{\R_{+}^{n-1}}
f_1(y)\, 
\overline{f_2(y)}
\ 
d^{\times}y
%{dy_1dy_2\over y_1^3y_2^3}
\\
\nonumber
=
\<f_1^\sharp,f_2^\sharp\>
&=&
{1\over \pi^{n-1}} 
\int_{\R^{n-1}}
{h_1^\sharp(t)\, 
\overline{
h_2^\sharp(t) 
}
}\
  \frac{dt}{\prod_{1\le k \ne \ell\le n} \Gamma\left(\frac{\alpha_k - \alpha_\ell}{2}  \right)}  %dt
.
\eea
\end{cor}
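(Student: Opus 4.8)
\emph{Proof strategy.}\; The plan is to substitute Definition \ref{def:1.1} into the claimed right-hand side, interchange the order of integration, and reduce the statement to a Mellin--Barnes identity that can be dispatched by the residue theorem together with Stade's evaluation of a generalized Barnes integral. Writing out the composition gives
\[
\frac{1}{\pi^{n-1}}\int_{\R^{n-1}} f^\#(t)\, W_{-it}(y)\,\frac{dt}{\prod_{1\le k\ne\ell\le n}\Gamma\!\left(\frac{\alpha_k-\alpha_\ell}{2}\right)}
\;=\;\int_{\R_+^{n-1}} f(y')\,\mathcal{K}(y,y')\,d^\times y',
\]
where
\[
\mathcal{K}(y,y')\;:=\;\frac{1}{\pi^{n-1}}\int_{\R^{n-1}} W_{it}(y')\, W_{-it}(y)\,\frac{dt}{\prod_{1\le k\ne\ell\le n}\Gamma\!\left(\frac{\alpha_k-\alpha_\ell}{2}\right)}.
\]
The interchange is legitimate since $f$ has compact support, $W_{it}$ decays rapidly at the cusp and grows only polynomially at the origin, and $\Gamma\!\left(\tfrac{\alpha_k-\alpha_\ell}{2}\right)^{-1}$ contributes only polynomial growth in $t$ against the exponential decay of $W_{it}$ in the spectral variable. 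The theorem is therefore equivalent to the reproducing identity $\int f(y')\,\mathcal K(y,y')\,d^\times y' = f(y)$, i.e.\ to showing that $\mathcal{K}$ is the Dirac kernel for $d^\times y'$; since $f$ is smooth of compact support its Mellin transform is entire and of rapid decay on vertical subspaces, so by Mellin inversion it suffices to verify this on the Mellin characters $y'\mapsto\prod_k (y'_k)^{-s_k}$, keeping the $s$-contour in a fixed range and shifting it only at the end.

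Next I would insert the Mellin--Barnes representation of each of the two Whittaker functions. Writing $\widehat W(s;it):=\int_{\R_+^{n-1}} W_{it}(y)\prod_k y_k^{s_k}\,d^\times y$ for the Mellin transform of the spherical Whittaker function --- which for $n=2$ is the classical $K$-Bessel integral and in general is the multiple Mellin--Barnes integral whose integrand is a product of Gamma factors in the parameters $v_{j,k}$ attached to $\nu_j=1/n+it_j$ --- and applying Mellin inversion to $W_{-it}(y)$ as well, everything collapses into a contour integral of Gamma functions, and the only genuinely new integral that remains is the spectral integral over $t\in\R^{n-1}$. Concretely one is reduced to an identity of the shape
\[
\frac{1}{\pi^{n-1}}\int_{\R^{n-1}} \widehat W(w;it)\,\widehat W(s;-it)\,\frac{dt}{\prod_{1\le k\ne\ell\le n}\Gamma\!\left(\frac{\alpha_k-\alpha_\ell}{2}\right)}
\;=\;\bigl(\text{explicit product of Gamma factors in } w,s\bigr),
\]
whose right-hand side, as a meromorphic function of $w$, should have a simple pole along the diagonal $w=s$ with the correct residue and be otherwise harmless.

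The heart of the matter is the evaluation of this spectral integral. Its integrand is a ratio of products of Gamma functions whose arguments are affine in $t$ through the Langlands parameters $\alpha_k=\alpha_k(it)$ of \eqref{eq:gaDefN}, so the integral over $t\in\R^{n-1}$ is precisely a generalized Barnes integral, and this is where Stade's formula \cite{Stade2002} --- the $\GL(n)$ analogue of Barnes' first lemma --- is invoked to put it in closed form. With the closed form in hand I would finish by the residue theorem: reinserting it under the remaining Mellin inversion integral in $w$ and shifting the $w$-contour past the diagonal pole $w=s$, the residue at that pole returns exactly $\prod_k y_k^{-s_k}$ (hence, after integrating against the Mellin transform of $f$, returns $f(y)$), while the shifted integral is pushed to infinity and shown to vanish using the rapid decay of the Gamma factors on vertical lines together with the vanishing of the Plancherel density $\prod_{k\ne\ell}\Gamma\!\left(\tfrac{\alpha_k-\alpha_\ell}{2}\right)^{-1}$ on the walls of the Weyl chamber. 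The normalizing constant $\pi^{-(n-1)}$ is then forced by the $\pi$-powers built into the completed Whittaker function in \eqref{WhittakerFunction} and by the normalization of Stade's formula.

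I expect the main obstacle to be twofold. First, the contour bookkeeping needed for Stade's generalized Barnes integral to apply verbatim: as always with Barnes-type lemmas the vertical contours must separate the ascending and descending families of poles of the Gamma factors, and here one is juggling the somewhat intricate $v_{j,k}$ and $\alpha_k$ attached to $\nu_j=1/n+it_j$, so matching the parameters correctly --- and checking that the diagonal pole in $w$ is the only one that survives the final contour shift --- is delicate. Second, justifying the interchanges of integration and the shift of the $w$-contour to infinity requires uniform bounds on the relevant Gamma quotients (in particular on $\prod_{k\ne\ell}\Gamma\!\left(\tfrac{\alpha_k-\alpha_\ell}{2}\right)^{-1}$, which grows polynomially in $t$), which should be comfortably beaten by the exponential decay coming from the Whittaker Mellin transforms and by the super-polynomial decay of the Mellin transform of $f$.
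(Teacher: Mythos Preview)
The Corollary follows immediately from the Inversion Theorem (the paper simply says ``As a consequence\dots''), so the real task is to prove inversion. You effectively recognize this and sketch a proof of inversion. But your route differs from the paper's in a way that creates a genuine gap.

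The paper proves $(H^\flat)^\sharp=H$, not $(f^\sharp)^\flat=f$. After expanding $H^\flat$ and interchanging, the \emph{inner} integral is over $y$ and takes exactly the shape
\[
\int_{\R_+^{n-1}} W_{-it'}(y)\,W_{-it}(y)\,\prod_{j=1}^{n-1} y_j^{(n-j)\vep}\,d^\times y,
\]
a Rankin--Selberg integral in a single auxiliary parameter $\vep$. This is precisely what Stade's formula \eqref{stadeGn} evaluates, yielding the product $\prod_{j,k}\Gamma\bigl(\tfrac{\vep+\alpha_j'+\alpha_k}{2}\bigr)$ divided by $\Gamma(n\vep/2)$. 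The remaining integral is then over the spectral variable $t'$; the paper shifts the $\alpha'$-contours, collects $n!$ residues which by Weyl symmetry each contribute $H(t)/n!$, and the shifted integrals vanish as $\vep\to0$ because of the $1/\Gamma(n\vep/2)$.

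In your direction the inner integral is over $t$, namely $\int_{\R^{n-1}} \widehat W(w;it)\,\widehat W(s;-it)\,\dfrac{dt}{\prod\Gamma}$. Two problems. First, this is \emph{not} Stade's formula: Stade's identity is about a $y$-integral of two Whittaker functions with one power parameter, not a spectral integral of their full Mellin transforms, so the appeal to \cite{Stade2002} at that step is misplaced. Second, and more seriously, your premise that the integrand is ``a ratio of products of Gamma functions whose arguments are affine in $t$'' fails for $n\ge4$: the full $(n-1)$-variable Mellin transform $\widehat W(s;it)$ is a closed product of Gamma factors only for $n=2,3$; for larger $n$ it is itself an iterated Mellin--Barnes integral, so there is no ready-made Barnes lemma to invoke. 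The paper's approach sidesteps this entirely because Stade's Rankin--Selberg evaluation holds in closed form for all $n$, and it never requires the full Mellin transform of a single Whittaker function.

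For $n=2$ your spectral integral is literally Barnes' first lemma, and for $n=3$ (where $\widehat W$ is a ratio of Gammas) your plan could be pushed through by a two-variable contour shift; but as a proof for general $n$ the key step is missing, and the fix is to reverse the order --- compute $(H^\flat)^\sharp$ so that Stade's formula applies directly to the $y$-integral --- exactly as the paper does.
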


Thus the measure
$$
  \frac{dt}{\prod_{1\le k \ne \ell\le n} \Gamma\left(\frac{\alpha_k - \alpha_\ell}{2}  \right)}  %dt
$$
is the Plancherel measure for the Lebedev-Whittaker transform on $\GL(n,\R)$. Notice that by taking the product of half of the Gamma functions in the  denominator, i.e. by taking  
$
\prod_{1\le k < \ell\le n} \Gamma\left(\frac{\alpha_k - \alpha_\ell}{2}  \right),
$
we obtain the Harish-Chandra $c$-function, $c(i\nu)$ (see Wallach \cite[\S15.10.3]{Wallach1992}), so the measure can also be written, after the linear change of variables \eqref{eq:nuTot}, as
$$
\frac{d\nu}{c(i\nu)c(-i\nu)}
.
$$

\subsection*{Organization}
This paper is organized as follows. In \S\ref{recall}, we recall %Jacquet's Whittaker function and 
Stade's formula, which is a key ingredient in our proof.  
%Since a rigorous exposition of Theorem \ref{}
A sketch of the
 proof of the Lebedev-Whittaker inversion formula is given in \S 3.
For ease of notation we restrict to the case $n=3$, that is, $\GL(3)$; it will be clear how to proceed on $\GL(n)$. 
%In the Appendix, we give an elementary proof of the Mellin inversion  theorem.
As an after thought, we also treat in the appendix the case of $GL(1)$,  by giving an elementary proof of the Mellin inversion formula.

\subsection*{Acknowledgements}
The authors would like to thank Valentin Blomer for comments and corrections to an earlier draft.
\\

%%%%%%%%%%%%%%%%%%%%%%%%%%%%%%%%%%%%
\section{%Jacquet-Whittaker Functions and 
Stade's Formula}\label{recall}

We use the notation setup in the previous section.  
 Recall that we are assuming that $\pi_\infty$ is unramified, which implies that the eigenvalue parameters $\nu$ are tempered, i.e. $\nu_j=1/n+ i t_j$ with $t_j\in \R$, see \eqref{eq:nuTot}. Let $\mu_{j}=1/n+iu_{j}$ with $u_{j}\in\R$, and define $\gb_{j}$ related to $u_{j}$ in the same way as $\ga_{j}$ are related $t_{j}$, that is \eqref{eq:gaDefN}. 
% 
%We begin with
%Recall
 Stade's formula  for $\GL(n)$ (see \cite[Prop 11.6.17]{Goldfeld2006}) is as follows. 
\begin{theorem}[\cite{Stade2002}]
Let $n \ge 2.$ Then for $t,u\in\R^{n-1}$,  $s\in\C$ with $\Re(s)\ge1$,
\bea\label{stadeGn}
&&
\int_{\R_{+}^{n-1}}%{y_1=0}^\infty\cdots\int_{y_{n-1}=0}^\infty 
W_{it}(y) W_{iu}(y) \prod_{j=1}^{n-1}y_j^{(n-j)s}d^{\times}y%{dy_j\over y_j^{1+j(n-j)}}
\\
\nonumber
&&
\hskip1in
=
%(-1)^{{n(n+1)+2\over 2}}{2^{(n+1)n(n-1)\over 6}  \pi^{-s{n(n-1)\over 2}} } 
{1\over 2\,\pi^{s\frac{n(n-1)}2}}
\; \dfrac{1}{  \Gamma\left({ns\over2}\right)}\prod\limits_{j=1}^n\prod\limits_{k=1}^n \Gamma\left({s+\ga_j+\gb_k\over 2}\right) .
\eea
\end{theorem}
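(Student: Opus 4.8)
The plan is to reduce the left-hand side of \eqref{stadeGn} to a multidimensional Barnes-type contour integral of a ratio of Gamma functions, and then to evaluate that integral in closed form by induction on $n$, peeling off one contour variable at a time with Barnes' first and second lemmas. This is essentially Stade's strategy \cite{Stade2002}, and it explains the remark that the formula is a vast generalization of Barnes' Lemma. Throughout one restricts to $\Re(s)\ge 1$, where the left-hand side converges absolutely thanks to the exponential decay of $W_{it}(y)$ as the coordinates $y_k\to\infty$ together with its controlled behavior as $y_k\to 0$.

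To set up the Barnes integral, one uses that the completed Whittaker function of \eqref{WhittakerFunction} admits a Mellin--Barnes representation: $W_{it}(y)$ equals a multiple contour integral, with integrand a ratio of Gamma functions built from the spectral parameters $\nu$ (equivalently from the $\alpha_j$ via \eqref{eq:gaDefN}), times a monomial in the $y_j$. Inserting this representation for each of the two factors $W_{it}$ and $W_{iu}$, and performing the elementary integration of $\prod_j y_j^{(n-j)s}$ against the two monomials over $\R_+^{n-1}$, produces by Mellin inversion linear relations among the two sets of contour variables and the parameter $s$. After using these relations to eliminate one set of variables, what survives is a multiple contour integral whose integrand is purely a ratio of Gamma functions in the $\alpha_j$, the $\beta_k$, and $s$ — the generalized Barnes integral to be evaluated.

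This contour integral is then evaluated by induction on $n$. The base case $n=2$ is a single Barnes integral: there $W_{it}$ is, up to the Gamma prefactor in \eqref{WhittakerFunction} and a power of $y_1$, the Macdonald--Bessel function $K_{it}$, and the integral $\int_0^\infty K_{it}(y)K_{iu}(y)\,y^{w}\,\frac{dy}{y}$ (with $w$ a linear function of $s$) is evaluated via the Mellin transform $\int_0^\infty K_\nu(y)y^{w-1}dy = 2^{w-2}\Gamma(\tfrac{w+\nu}{2})\Gamma(\tfrac{w-\nu}{2})$ followed by Mellin--Parseval and Barnes' first lemma, giving exactly $\Gamma(s)^{-1}$ times the four factors $\prod_{j,k=1}^2\Gamma(\tfrac{s+\alpha_j+\beta_k}{2})$. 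For the inductive step one integrates out the innermost contour variable using Barnes' second lemma; the evaluation peels off one row and one column of the double product, namely the $2n-1$ factors $\Gamma(\tfrac{s+\alpha_n+\beta_k}{2})$ and $\Gamma(\tfrac{s+\alpha_j+\beta_n}{2})$, and matches the residual integral to the $\GL(n-1)$ case under the change of variables \eqref{eq:gaDefN}. Reassembling, the accumulated normalizations deliver the prefactor $\tfrac12\,\pi^{-sn(n-1)/2}$ and the single denominator factor $\Gamma(ns/2)^{-1}$.

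The main obstacle is the combinatorial bookkeeping in the inductive step: one must verify that each application of Barnes' second lemma produces \emph{exactly} the missing row and column of Gamma factors, with the denominator index advancing correctly from $\Gamma((n-1)s/2)$ to $\Gamma(ns/2)$ and the power of $\pi$ advancing by the factor $\pi^{-s(n-1)}$, rather than some permutation of these. Controlling this requires a careful analysis of the linear map \eqref{eq:gaDefN} relating $\nu,\mu$ to $\alpha,\beta$ across consecutive ranks, so that the truncated tuples feeding the induction hypothesis are identified correctly. A secondary but necessary point is the analytic justification — legitimacy of interchanging the $y$-integration with the Mellin--Barnes contours, and of the contour shifts used in applying Barnes' lemmas — which follows for $\Re(s)\ge 1$ from the Whittaker decay and from Stirling's asymptotics for the Gamma factors along vertical lines.
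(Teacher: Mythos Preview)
The paper does not supply its own proof of this theorem: Stade's formula is stated with attribution to \cite{Stade2002} (and to \cite[Prop.~11.6.17]{Goldfeld2006}) and then used as a black box in the proof of Theorem~\ref{thm:KLT}. So there is no proof in the paper to compare against; your proposal is effectively a sketch of the argument in the cited reference rather than of anything the present authors do.

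As a summary of Stade's method your outline is broadly faithful---induction on $n$, a Mellin--Barnes/recursive integral representation of $W_{it}$, and repeated application of Barnes' lemmas---but one point is oversimplified. The passage from $\GL(n-1)$ to $\GL(n)$ is not accomplished by a \emph{single} application of Barnes' second lemma producing the missing $2n-1$ Gamma factors in one stroke. Stade's inductive step starts from a recursive integral expressing the $\GL(n)$ Whittaker function as an integral transform of the $\GL(n-1)$ one; inserting this and the induction hypothesis leaves several contour variables, which are eliminated one by one, each elimination using a Barnes-type identity. The bookkeeping you flag as ``the main obstacle'' is genuinely the heart of the matter, and your sketch does not indicate how the successive Barnes evaluations chain together so that the parameters line up. If you intend this as more than a pointer to \cite{Stade2002}, that chaining is what needs to be written out.
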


\

%%%%%%%%%%%%%%%%%%%%%%%%%%%
%%%%%%%%%%%%%%%%%%%%%%%%%%%%%%%%

\section{%Proof for 
Lebdev-Whittaker inversion for GL$(3)$
}

We now specialize to $n=3$ for ease of exposition. In this case, 
%Define for $\GL(3,\R)$ 
the Lebedev-Whittaker transform of a smooth, compactly supported test function $h:\R_{+}^{2}\to\C$ becomes
\be\label{eq:KLt}
h^\sharp(t_{1},t_{2}):=
\int_{y_{1}=0}^{\infty} 
\int_{y_{2}=0}^{\infty} 
h(y_{1},y_{2}) 
W_{it_{1},it_{2}}
(y_{1},y_{2})
{
dy_{1}dy_{2}
\over
y_{1}^{3}
y_{2}^{3}
}
.
\ee

Note that $h^{\sharp}(t_{1},t_{2})$ inherits the same functional equations as $W_{it_{1},it_{2}}$; these are invariance under permutation of the parameters $\ga_{1},\ga_{2},\ga_{3}$, defined by (cf. \eqref{eq:gaDefN}) 
\be\label{eq:gaDef}
\ga_{1}=2it_{1}+it_{2},\qquad 
\ga_{2}=-it_{1}+it_{2},\qquad
\text{ and }\qquad
\ga_{3}=-it_{1}-2it_{2}.
\ee
%see (5.9.7) in \cite{Goldfeld2006}.

The inverse transform is given as follows.
%
%Define space of test functions.....
%
For 
$H:\R^{2}\to\C%\in\cH_{3}^{\sharp}
$ having the above symmetries in $(t_{1},t_{2})$,  let
\bea\label{eq:KLiDef}
H^{\flat}(y_{1},y_{2})
&=&
{1\over \pi^2} 
\int\limits_{t_{1}=-\infty}^{\infty} 
\int\limits_{t_{2}=-\infty}^{\infty}  
{
H
(t_1,t_2) 
W_{-it_1,-it_2}
(y_1,y_2) 
}
%\\
%\nonumber
%\\
%\nonumber
%&&
%\qquad
%\times
{
dt_1dt_2
\over 
\prod_{1\le \ell\neq \ell' \le 3} \G\left({\ga_{\ell}-\ga_{\ell'}\over 2}\right)
}
,
\eea
assuming the integral converges absolutely. %, and  
%where the $\ga_{\ell}$ are defined in \eqref{eq:gaDef}.

The following is a restatement of Theorem \ref{thm:main}.

\begin{thm}[Lebedev-Whittaker Inversion]\label{thm:KLT}
$$
(H^{\flat})^{\sharp}=H.
$$
\end{thm}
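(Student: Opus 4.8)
The plan is to prove $(H^\flat)^\sharp = H$ by testing against a dense family of functions for which both sides can be computed explicitly using Stade's formula. Concretely, it suffices to show that for every $u = (u_1, u_2) \in \R^2$,
$$
\int_{\R_+^2} H^\flat(y_1,y_2)\, W_{iu_1,iu_2}(y_1,y_2)\, \frac{dy_1\,dy_2}{y_1^3 y_2^3} \;=\; H(u_1,u_2),
$$
since by definition the left-hand side is $(H^\flat)^\sharp(u)$, and because the Whittaker functions $W_{iu}$ (equivalently the resulting transforms) separate points in an appropriate function space. So the real content is the computation of this pairing.

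**First step.** I would substitute the definition \eqref{eq:KLiDef} of $H^\flat$ and interchange the $t$-integral with the $y$-integral (justified by absolute convergence, using compact support / rapid decay and the temperedness bounds on Whittaker functions). This produces an inner integral
$$
\int_{\R_+^2} W_{-it_1,-it_2}(y_1,y_2)\, W_{iu_1,iu_2}(y_1,y_2)\, \frac{dy_1\,dy_2}{y_1^3 y_2^3},
$$
which is exactly the $s \to 0$ (i.e.\ $\Re(s) = 0$) case of Stade's formula \eqref{stadeGn} with $n=3$, for the pair of parameters $-t$ and $u$. Plugging in $s=0$ gives, up to the constant $\tfrac12$ and the factor $\Gamma(ns/2)^{-1} = \Gamma(0)^{-1} = 0$, the product $\prod_{j,k=1}^{3}\Gamma\!\big(\tfrac{-\ga_j + \gb_k}{2}\big)$ — so Stade's formula at $s=0$ is formally $0 \cdot \infty$ and must be interpreted as a limit or distributionally. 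This is the technical crux: Stade's identity is stated for $\Re(s)\ge 1$, and the Whittaker–Plancherel pairing lives on the boundary $\Re(s)=0$, where the spectral measure emerges as a distributional limit (a sum of delta masses coming from the $\Gamma$-function poles of the integrand as $s \downarrow 0$).

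**Second step — the contour argument.** Rather than take $s\to 0$ naively, I would keep $\Re(s) = \sigma > 0$ small, so that the $t$-integral against $\prod\Gamma\big(\tfrac{s - \ga_j + \gb_k}{2}\big)\big/\big(\Gamma(\tfrac{3s}{2})\prod_{\ell\ne\ell'}\Gamma(\tfrac{\ga_\ell-\ga_{\ell'}}{2})\big)$ is absolutely convergent, then shift the contours in $t_1, t_2$ (equivalently, deform the $\ga$-contours). As $\sigma \downarrow 0$, the poles of the numerator Gamma factors $\Gamma\big(\tfrac{s-\ga_j+\gb_k}{2}\big)$ — which occur when $\ga_j - \gb_k = s + 2m$ for $m \ge 0$, and in the limit pinch against the real axis — cross the contour and contribute residues. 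The leading residue (at $m=0$, forcing $\ga_j = \gb_k$ for all three indices, i.e.\ $t = u$ up to the Weyl group $S_3$) should reconstruct $H(u)$ after one checks that the residue of the full kernel exactly cancels the denominator $\prod_{\ell\ne\ell'}\Gamma\big(\tfrac{\ga_\ell - \ga_{\ell'}}{2}\big)$ and the $\Gamma(3s/2)^{-1}$ normalization, leaving the constant $\pi^2$ to match $1/\pi^2$; the $|S_3| = 6$ permutation terms combine with the symmetry of $H$ and the factor to give a clean $1$. Non-leading residues and any boundary terms must be shown to vanish as $\sigma\to 0$ (this is where the precise location-of-poles bookkeeping for the Gamma function, and the $s\ge 1$ hypothesis being relaxed analytically, does its work).

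**Main obstacle.** The hard part will be the residue bookkeeping in step two: correctly identifying which poles of $\prod_{j,k}\Gamma\big(\tfrac{s-\ga_j+\gb_k}{2}\big)$ are "caught" as the contour is pushed to the imaginary axis, verifying that the residue at the diagonal stratum $\{\ga = w\cdot\gb : w \in S_3\}$ produces precisely $H(u)/\text{(Plancherel density)}$ with the right constant, and showing all other contributions vanish in the limit $\sigma \to 0^+$. In higher rank this is a genuinely intricate combinatorial-analytic computation; restricting to $n=3$, as the paper does, keeps the Weyl group small enough ($S_3$, six terms, effectively a double contour shift in $t_1$ and $t_2$) that the residues can be enumerated by hand, with Barnes' lemma / Stade's formula supplying the needed closed forms at each stage.
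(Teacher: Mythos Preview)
Your proposal is correct and follows essentially the same route as the paper: regularize the Whittaker pairing by inserting $(y_1^2 y_2)^\varepsilon$ (equivalently, take $\Re(s)=\sigma>0$ in Stade's formula), then shift the $(\ga_1',\ga_2')$-contours to the left, pick up the six Weyl-symmetric residues which each contribute $\tfrac16 H$ as $\varepsilon\to 0$, and observe that the shifted integrals vanish in the limit because of the $\Gamma(3\varepsilon/2)^{-1}$ factor. Your ``testing against a dense family'' framing is superfluous, since as you note the pairing you compute is literally $(H^\flat)^\sharp(u)$ by definition; otherwise the strategy and the identification of the main obstacle (the residue bookkeeping) match the paper exactly.
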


\begin{comment}

As a corollary of the above, we have determined the Plancherel measure for Whittaker transforms on $\GL(n)$.
\begin{cor}
For $h_{1},h_{2}:\R_{+}^{2}\to\C$, smooth of compact support, we have
\bea
\label{KLP}
\<h_1,h_2\>
&=&
\int_0^\infty \int_0^\infty h_1(y_1,y_2)\, 
\overline{h_2(y_1,y_2)}
\ 
{dy_1dy_2\over y_1^3y_2^3}
\\
\nonumber
=
\<h_1^\sharp,h_2^\sharp\>
&=&
{1\over \pi^2} \int_\R \int_\R  {h_1^\sharp(t_1,t_2)\, 
\overline{
h_2^\sharp(t_1,t_2) 
}
}\
{
dt_1dt_2
\over 
%\prod_{1\le j\neq k\le3}
\left|
\G\left(
{
3it_1
 \over 2}
\right)
\right|^2
\left|
\G\left(
{
3it_2
 \over 2}
\right)
\right|^2
\left|
\G\left(
{
3it_1
+
3it_2
 \over 2}
\right)
\right|^2
}
.
\eea
\end{cor}
%Notice that all of the functions involved are actually real-valued if the $h$.
\end{comment}

%We now sketch a proof of Theorem \ref{thm:KLT}, part {\bf (2)}. 

\pf[Sketch of the proof% of Theorem \ref{thm:KLT}
]

%Fix $y=(y_{1},y_{2})$,  $t=(t_{1},t_{2})$ and $\ga_j$ as above. The dummy variables are $u=(u_1,u_{2})$ and $\gb_j$, related to $u_{j}$ in the same way as $\ga_{j}$ to $t_{j}$.

Assume that 
 the function 
$H(t_{1},t_{2})$
is invariant under permutations of $\ga_{1},\ga_{2},\ga_{3}$,
 that the integral \eqref{eq:KLiDef} converges absolutely,
and that for some $\vep_{0}>0$,
 $H(t_{1},t_{2})$ is  holomorphic in the region 
$$
|\Im(t_{1})|,|\Im(t_{2})|<2\vep_{0}
.
$$ 
For any $0<\vep<\vep_{0}$, define
\bea
\nonumber
\cH(t_{1},t_{2},\vep)
&:=&
\int_{y_{1}=0}^{\infty}
\int_{y_{2}=0}^{\infty}
 H^{\flat}(y_{1},y_{2})\
 W_{-it_{1},-it_{2}}
 %(y)
\left(
y_{1},y_{2}
%\matthree {y_{1}y_{2}}{}{}{}{y_{1}}{}{}{}1
\right)
 (y_1^2y_2)^\vep 
 \,
 {dy_1\over y_1^3}{dy_2\over y_2^3}.
 \eea
 Then clearly the limit of the above as $\vep\to0$ converges to $(H^{\flat})^{\sharp}$. Hence 
we must show that $\cH(t_{1},t_{2},\vep)\to H(t_{1},t_{2})$ as $\vep\to0$. For simplicity, we assume that the $\ga_{j}$ are all distinct.

 Insert the definition of $H^{\flat}$:
 \beann
\cH(t_{1},t_{2},\vep)
&=&
\int_{y_{1}=0}^{\infty}
\int_{y_{2}=0}^{\infty}
\left(
%%%%%%%%%%%%%%%%%%%%%%%%%
%%%%%%%%%%%%%%%%%%%%%%%%%
{1\over \pi^2} 
\int\limits_{t'_{1}=-\infty}^{\infty} 
\int\limits_{t'_{2}=-\infty}^{\infty}  
{
H
(t'_1,t'_2) 
W_{-it'_1,-it'_2}
(y_{1},y_{2}) 
%\left(
%\matthree {y_{1}y_{2}}{}{}{}{y_{1}}{}{}{}1
%\right)
}
\right.
%\\
%\nonumber
\\
&&
\qquad\qquad
\left.
\times
{
dt'_1dt'_2
\over 
%\prod_{1\le j\neq k\le3}
\G\left(
{
3it'_1
 \over 2}
\right)
\G\left(
{
-3it'_1
 \over 2}
\right)
%\right|^2
%\left|
\G\left(
{
3it'_2
 \over 2}
\right)
\G\left(
{
-3it'_2
 \over 2}
\right)
%\right|^2
%\left|
\G\left(
{
3it'_1
+
3it'_2
 \over 2}
\right)
\G\left(
{
-3it'_1
-
3it'_2
 \over 2}
\right)
%%%%%%%%%%%%%%%
\begin{comment}
\left|
\G\left(
{
3it'_1
 \over 2}
\right)
\right|^2
\left|
\G\left(
{
3it'_2
 \over 2}
\right)
\right|^2
\left|
\G\left(
{
3it'_1
+
3it'_2
 \over 2}
\right)
\right|^2
\end{comment}
%%%%%%%%%%%%%%
}
%%%%%%%%%%%%%%%%%%%
%%%%%%%%%%%%%%%%%%%%%
\right)
 \\
&&
\hskip3.5in
\times
 W_{-it_{1},-it_{2}}
 (y)
%\left(
%\matthree {y_{1}y_{2}}{}{}{}{y_{1}}{}{}{}1
%\right)
 (y_1^2y_2)^\vep 
 {dy_1\over y_1^3}{dy_2\over y_2^3}
.
 \eeann
Interchanging orders, one inserts Stade's Formula \eqref{stadeGn} with $s=\vep$. 
Simplifying gives:
\beann
\hskip-10pt
\cH(t_{1},t_{2},\vep)
&=&
{1\over \pi^2} 
\int\limits_{t'_{1}=-\infty}^{\infty} 
\int\limits_{t'_{2}=-\infty}^{\infty}  
H
(t'_1,t'_2) \
\Bigg[
{1
\over
{
 \pi^{3\vep}
\G(3\vep/2)
}
2
}
\
\G\left({\vep-2it_{1}-it_{2}+2it'_{1}+it'_{2}\over2}\right) 
\\
&&
\quad\times
\G\left({\vep-2it_{1}-it_{2}-it'_{1}+it'_{2}\over2}\right) 
\G\left({\vep-2it_{1}-it_{2}-it'_{1}-2it'_{2}\over2}\right) 
\\
&&
\quad\quad
\times
\G\left({\vep+it_{1}-it_{2}+2it'_{1}+it'_{2}\over2}\right) 
\G\left({\vep+it_{1}-it_{2}-it'_{1}+it'_{2}\over2}\right) 
\\
&&
\quad\quad \quad
\times
\G\left({\vep+it_{1}-it_{2}-it'_{1}-2it'_{2}\over2}\right) 
\G\left({\vep+it_{1}+2it_{2}+2it'_{1}+it'_{2}\over2}\right) 
\\
&&
\quad\quad \quad \quad
\times
\G\left({\vep+it_{1}+2it_{2}-it'_{1}+it'_{2}\over2}\right) 
\G\left({\vep+it_{1}+2it_{2}-it'_{1}-2it'_{2}\over2}\right) 
\Bigg]
 \\
 &&
 \hskip60pt
\times
{
dt'_1dt'_2
\over 
%\prod_{1\le j\neq k\le3}
%\left|
\G\left(
{
3it'_1
 \over 2}
\right)
\G\left(
{
-3it'_1
 \over 2}
\right)
%\right|^2
%\left|
\G\left(
{
3it'_2
 \over 2}
\right)
\G\left(
{
-3it'_2
 \over 2}
\right)
%\right|^2
%\left|
\G\left(
{
3it'_1
+
3it'_2
 \over 2}
\right)
\G\left(
{
-3it'_1
-
3it'_2
 \over 2}
\right)
%\right|^2
}
.
\eeann

We make the change of variables $(t_{1}',t_{2}')\mapsto(\ga_{1}',\ga_{2}')$, where
(see \eqref{eq:gaDef}) 
$$
\ga_{1}' = 2it_{1}' + it_{2}',\qquad \text{and} \qquad
\ga_{2}' = -it_{1}' + it_{2}'
.
$$
The Jacobian is $|\det (\dd\ga'/\dd t')|=-3$. Similarly, we use the notation \eqref{eq:gaDef} to simplify the appearance of the above expression, which is now:

\beann
\hskip-20pt
\cH(t_{1},t_{2},\vep)
&=&
{1\over \pi^2} 
\int\limits_{\ga'_{1}=-i\infty}^{i\infty} 
\int\limits_{\ga'_{2}=-i\infty}^{i\infty}  
H
\left(
{\ga'_1-\ga'_{2}\over 3i},
{\ga'_1+2\ga_{2}'\over 3i}
\right) \
\\
&&
\hskip1in
\times
\Bigg[
{1
\over
{
 \pi^{3\vep}
\G(3\vep/2)
}
2
}
\
\G\left({\vep-\ga_{1}+\ga_{1}'\over2}\right) 
\G\left({\vep-\ga_{1}+\ga_{2}'\over2}\right) 
\G\left({\vep-\ga_{1}+\ga_{3}'\over2}\right) 
\\
&&
\hskip1.1in
\times
\G\left({\vep-\ga_{2}+\ga_{1}'\over2}\right) 
\G\left({\vep-\ga_{2}+\ga_{2}'\over2}\right) 
\G\left({\vep-\ga_{2}+\ga_{3}'\over2}\right) 
 \\
 &&
\hskip1.2in
\times
\G\left({\vep-\ga_{3}+\ga_{1}'\over2}\right) 
\G\left({\vep-\ga_{3}+\ga_{2}'\over2}\right) 
\G\left({\vep-\ga_{3}+\ga_{3}'\over2}\right) 
\Bigg]
\\
&&
\hskip66pt
\times
{
{-1\over3}
d\ga'_1d\ga'_2
\over 
%\prod_{1\le j\neq k\le3}
%\left|
\G\left(
{
2\ga'_1+\ga'_{2}
 \over 2}
\right)
\G\left(
{
-2\ga'_1-\ga'_{2}
 \over 2}
\right)
%\right|^2
%\left|
\G\left(
{
\ga'_{1}-\ga'_{2}
 \over 2}
\right)
\G\left(
{
-\ga'_{1}+\ga'_{2}
 \over 2}
\right)
%\right|^2
%\left|
\G\left(
{
\ga'_{1}+2\ga'_{2}
 \over 2}
\right)
\G\left(
{
-\ga'_{1}-2\ga'_{2}
 \over 2}
\right)
%\right|^2
}
.
\eeann

%%%%%%%%%%%%%%%%%%%%%%%%%%%%%%%

 Shift line of integration from $\ga'_{1}\in\{i\R\}$ to $\ga'_{1}\in\{-\vep_{0}+i\R\}$, with $\vep_{0}>\vep$. We pass through poles at
 \beann
 \ga'_{1} &=& -\vep+\ga_{1},\qquad\qquad\text{with residue $\cR_{1}$},
\\ 
\ga'_{1} &=& -\vep+\ga_{2},\qquad\qquad\text{with residue $\cR_{2}$},
\\ 
\ga'_{1} &=& -\vep-\ga_{1}-\ga_{2},\qquad\text{with residue $\cR_{3}$}.
 \eeann

%%%%%%%%%%%%%%%%%%%%%%%%%%%%%

Consider $\cR_{1}$. After some cancellations, we have
\beann
\cR_{1}
&=&
{i\over\pi}\int_{\ga'_{2}=-i\infty}^{i\infty}
{
H\left(
{-\vep+\ga_{1}-\ga'_{2}\over3i},{-\vep+\ga_{1}+2\ga'_{2}\over3i}
\right)
\over
\pi^{2\vep}\G(3\vep/2)2
}
\Bigg[
%\G\left(
%{\vep-\ga_{1}+\ga'_{2}\over2}
%\right)
%\G\left(
%{2\vep-2\ga_{1}-\ga'_{2}\over2}
%\right)
\G\left(
{\ga_{1}-\ga_{2}\over2}
\right)
\G\left(
{\vep-\ga_{2}+\ga'_{2}\over2}
\right)
\\
%%%%%%%%%%%
&&
\hskip10pt
\times
\G\left(
{2\vep-\ga_{1}-\ga_{2}+\ga'_{2}\over2}
\right)
\G\left(
{2\ga_{1}+\ga_{2}\over2}
\right)
\G\left(
{\vep+\ga_{1}+\ga_{2}+\ga'_{2}\over2}
\right)
\G\left(
{2\vep+\ga_{2}-\ga'_{2}\over2}
\right)
\Bigg]
\\
%%%%%%%%%%%%%%%
&&
\hskip20pt
\times
{
{-1\over3}
d\ga'_2
\over 
\G\left(
{-2\vep+2\ga_{1}+\ga'_{2} \over 2}
\right)
%\G\left(
%{2\vep-2\ga_{1}-\ga'_{2} \over 2}
%\right)
\G\left(
{-\vep+\ga_{1}-\ga'_{2} \over 2}
\right)
%\G\left(
%{\vep-\ga_{1}+\ga'_{2} \over 2}
%\right)
\G\left(
{-\vep+\ga_{1}+2\ga'_{2}\over 2}
\right)
\G\left(
{\vep-\ga_{1}-2\ga'_{2} \over 2}
\right)
}
\eeann

%%%%%%%%%%%%%%%%%%%%%%%%%%%%
Next, in the $\cR_{1}$ integral, we shift the line of integration to the left, from $\ga'_{2}\in\{i\R\}$ to $\ga'_{2}\in\{-\vep_{0}+i\R\}$. Now there are poles at:
 \beann
 \ga'_{2} &=& -\vep+\ga_{2},\qquad\qquad\qquad\text{with residue $\cR_{1,1}$},
\\ 
 \ga'_{2} &=& -\vep-\ga_{1}-\ga_{2},\qquad\qquad\text{with residue $\cR_{1,2}$.}
 \eeann
In total there are six such residues $\cR_{j,k}$, $1\le j \le 3$, $1\le k\le2$. In fact, by the invariance of $H$ in permutations of $\ga_{1},\ga_{2},\ga_{3}$, these residues all have the same  contribution. We now evaluate $\cR_{1,1}$. After simplification, we have
\beann
\cR_{1,1}
&=&
{1\over 2\cdot3\cdot \pi^{3\vep}}
H\left(
{\ga_{1}-\ga_{2}\over 3i},{-3\vep+\ga_{1}+2\ga_{2}\over3i}
\right)
\Bigg[
\G\left(
{2\ga_{1}-\ga_{2}\over2}
\right)
\G\left(
{\ga_{1}+2\ga_{2}\over2}
\right)
\Bigg]
\\
&&
\hskip160pt
\times
{1
\over
\G\left(
{-3\vep+2\ga_{1}-\ga_{2}\over2}
\right)
\G\left(
{-3\vep+\ga_{1}+2\ga_{2}\over2}
\right)
}
\\
&\to&
\frac16
H(t_{1},t_{2})
,
\eeann
as $\vep\to0$.
Hence the contribution from the six residues adds up to exactly $H(t_{1},t_{2})$.
The remaining integrals all contain the factor $\G\left(\frac{3\vep}2\right)$ in the denominator, making the integrals vanish as $\vep\to0$.
This completes the proof, under the assumption that the $\ga_{j}$ are all distinct.

Had the $\ga_{j}$ not been distinct, we would have had poles of order two in the contour shifting argument; the rest of the analysis is similar.
%This completes the proof. 
\epf
\
%%%%%%%%%%%%%%%%%%%%%%%%%%%%%%%%
%%%%%%%%%%%%%%%%%%%%%%%%%%%%%%%%

\appendix

\section{An elementary proof of Mellin inversion}

Fix some smooth, compactly supported test function $f:\R^{+}\to\C$. 
Define the Mellin transform 
\be\label{mdef}
\tilde f(s):=\int_0^\infty f(y)y^s {dy\over y}
\ee
and the Mellin inverse transform
\be\label{imdef}
h(x):={1\over 2\pi i}\int_{(2)} \tilde f(s) x^{-s}ds.
\ee
%These are equivalent to the Fourier transform and inverse after a change of variables, e.g. in \eqref{gdef} set $u=\log y$.
\begin{theorem}[Mellin inversion]
 $f(x)=h(x)$.
\end{theorem}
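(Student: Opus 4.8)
The statement to prove is the Mellin inversion formula: for $f:\R^+\to\C$ smooth of compact support, with $\tilde f(s)=\int_0^\infty f(y)y^s\,dy/y$ and $h(x)=\frac1{2\pi i}\int_{(2)}\tilde f(s)x^{-s}\,ds$, we have $f(x)=h(x)$.

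\begin{proof}[Proof proposal]
The plan is to substitute the definition \eqref{mdef} of $\tilde f$ into \eqref{imdef} and evaluate the resulting double integral by Fubini, reducing everything to a single known contour integral. First I would write
$$
h(x)=\frac1{2\pi i}\int_{(2)}\left(\int_0^\infty f(y)y^s\frac{dy}{y}\right)x^{-s}\,ds
=\frac1{2\pi i}\int_{(2)}\int_0^\infty f(y)\left(\frac{y}{x}\right)^s\frac{dy}{y}\,ds.
$$
Because $f$ is smooth of compact support, $\tilde f(s)$ is entire and decays faster than any polynomial on vertical strips (integration by parts in $y$), while $y\mapsto f(y)$ has compact support in $\R^+$; this absolute integrability on the product $\{\Re s=2\}\times\operatorname{supp}f$ justifies interchanging the two integrals. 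After swapping, I would change the inner variable from $s$ to the line $\Re s=2$ parametrized by $s=2+i\tau$, obtaining
$$
h(x)=\int_0^\infty f(y)\left[\frac1{2\pi i}\int_{(2)}\left(\frac yx\right)^s ds\right]\frac{dy}{y}.
$$

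The heart of the matter is then the inner bracket, i.e. showing that
$$
\frac1{2\pi i}\int_{(c)}u^{s}\,ds = \delta(\text{as a distribution at }u=1),
$$
more precisely that this ``kernel'' reproduces $f$. The clean elementary route is to avoid distributions: instead of pushing the $s$-integral through literally, I would first move the line of integration. Fix $x>0$. For $y<x$ we have $|y/x|^s=(y/x)^{\Re s}\to0$ as $\Re s\to+\infty$, and for $y>x$ it tends to $0$ as $\Re s\to-\infty$; combined with the rapid decay of $\tilde f$ on vertical lines, shifting the contour $\Re s = 2$ to $\Re s = +T$ for large $T$ kills the contribution of $y<x$ in the limit, and shifting to $\Re s=-T$ kills $y>x$. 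Since $\tilde f$ is entire there are no residues to collect, but of course this splitting does not by itself produce $f(x)$; rather it localizes the mass of the kernel at $y=x$, which is exactly the behavior of a Dirac mass.

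The rigorous and genuinely elementary way to conclude, which I expect to be the main step, is to recognize that $\frac1{2\pi i}\int_{(c)}u^{-s}\,ds$ is, after the substitution $u=e^{-v}$, a rescaled Fourier inversion: with $s=c+i\tau$ one gets $\frac{u^{-c}}{2\pi}\int_{-\infty}^\infty e^{-iv\tau}\,d\tau$, so the whole computation is Fourier inversion in disguise. Concretely, set $g(v):=f(e^{v})e^{cv}$; then $\tilde f(c+i\tau)=\int_{-\infty}^\infty g(v)e^{iv\tau}\,dv$ is the (suitably normalized) Fourier transform of the Schwartz-class function $g$, and $h(x)$ with $x=e^{w}$ becomes $e^{-cw}\cdot\frac1{2\pi}\int_{-\infty}^\infty \hat g(\tau)e^{-iw\tau}\,d\tau$. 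Invoking the classical Fourier inversion theorem for Schwartz functions gives $\frac1{2\pi}\int \hat g(\tau)e^{-iw\tau}\,d\tau=g(w)=f(e^w)e^{cw}$, hence $h(e^w)=f(e^w)$, i.e. $h(x)=f(x)$. The only genuine obstacle is justifying that $g$ is Schwartz: compact support in $v$ (from compact support of $f$ in $\R^+$) and smoothness give this immediately, so the proof is complete once this reduction is spelled out.
\end{proof}
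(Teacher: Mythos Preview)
Your argument is correct: the substitution $y=e^{v}$, $x=e^{w}$, $s=c+i\tau$ turns the Mellin pair into the Fourier pair for the compactly supported smooth function $g(v)=f(e^{v})e^{cv}$, and classical Fourier inversion finishes the job. The contour-shifting paragraph in the middle is not wrong but is also not used; you might drop it, since you yourself note it does not produce $f(x)$.

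The paper's proof takes a genuinely different, and in a sense more self-contained, route. Rather than invoking Fourier inversion as a black box, it integrates by parts twice in the definition of $\tilde f$ to insert a factor $1/(s(s+1))$, then swaps the integrals and appeals only to the elementary identity
\[
\frac{1}{2\pi i}\int_{(2)} x^{s}\,\frac{ds}{s(s+1)}=\begin{cases}1-1/x,& x>1,\\ 0,& x<1,\end{cases}
\]
which is an easy contour shift (two simple poles). This yields $h(x)=\int_{x}^{\infty}f''(y)(y-x)\,dy$, and two integrations by parts in the reverse direction recover $f(x)$. The trade-off: your approach is shorter and conceptually transparent (Mellin $=$ Fourier in logarithmic coordinates), but it outsources the analytic content to the Fourier inversion theorem; the paper's approach is longer but needs nothing beyond the residue theorem and integration by parts, which is the point of calling it ``elementary''.
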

\begin{proof}
We require the well-known formula
\be\label{elt}
{1\over 2\pi i }\int_{(2)} x^s {ds \over s(s+1)} = \twocase{}{1-\frac1x,}{if $x>1$}{0,}{if $x<1$.}
\ee
Starting with \eqref{mdef}, integrate by parts twice:
\beann
\tilde f(s)
&=&
-
\int_0^\infty f'(y){y^{s}\over s} {dy}
\\
&=&
\int_0^\infty f''(y){y^{s+1}\over s(s+1)} {dy}
.
\eeann
Insert this into \eqref{imdef}, reverse orders of integration and apply \eqref{elt}:
\beann
h(x)
&=&
{1\over 2\pi i}\int_{(2)} 
\left(
\int_0^\infty f''(y){y^{s+1}\over s(s+1)} {dy}\,
\right)
x^{-s}ds
\\
&=&
\int_0^\infty   f''(y)
\left(
{1\over 2\pi i}\int_{(2)}
\left(\frac yx\right)^{s}  {ds\over s(s+1)} 
\right)
 \,y\,{dy}
\\
&=&
\int_x^\infty   f''(y)
\left(1-\frac xy\right)
 \,y\,{dy}.
\eeann
And now integrate by parts twice (in the reverse direction):
\beann
h(x)
&=&
\int_x^\infty   f''(y)
\left(y-x\right)
 \,{dy}
 \\
&=&
-
\int_x^\infty   f'(y)
\left(1\right)
 \,{dy}
 \\
&=&
f(x)
 .
\eeann
\vskip-.2in
\end{proof}

Hence the Plancherel formula for the Mellin transform reads
$$
\int_{0}^{\infty}f_{1}(y)\overline{f_{2}(y)}
=
\<f_{1},f_{2}\>
=
\<\tilde f_{1},\tilde f_{2}\>
=
{1\over 2\pi i}
\int_{(0)}
\tilde f_{1}(s)
\overline{\tilde f_{2}(s)}
ds,
$$
that is, the Plancherel measure is just Lebesgue measure.

\bibliographystyle{alpha}

\bibliography{AKbibliog}

\begin{thebibliography}{Wal92}

\bibitem[Fla79]{Flath1979}
D.~Flath.
\newblock Decomposition of representations into tensor products.
\newblock In {\em Automorphic forms, representations and {$L$}-functions
  ({P}roc. {S}ympos. {P}ure {M}ath., {O}regon {S}tate {U}niv., {C}orvallis,
  {O}re., 1977), {P}art 1}, Proc. Sympos. Pure Math., XXXIII, pages 179--183.
  Amer. Math. Soc., Providence, R.I., 1979.

\bibitem[Gol06]{Goldfeld2006}
Dorian Goldfeld.
\newblock {\em Automorphic forms and {$L$}-functions for the group {${\rm
  GL}(n,\bold R)$}}, volume~99 of {\em Cambridge Studies in Advanced
  Mathematics}.
\newblock Cambridge University Press, Cambridge, 2006.
\newblock With an appendix by Kevin A. Broughan.

\bibitem[IK04]{IwaniecKowalski2004}
Henryk Iwaniec and Emmanuel Kowalski.
\newblock {\em Analytic Number Theory}, volume~53 of {\em American Mathematical
  Society Colloquium Publications}.
\newblock American Mathematical Society, Providence, RI, 2004.

\bibitem[KL38]{KontorovichLebedev1938}
M.I. Kontorovich and N.N. Lebedev.
\newblock On a method of solving certain problems of diffraction theory and
  related problems.
\newblock {\em Journal of Experimental and Theoretical Physics USSR},
  8(10-11):1192--1206, 1938.

\bibitem[KL39]{KontorovichLebedev1939}
M.I. Kontorovich and N.N. Lebedev.
\newblock On a method of solution of some problems of the diffraction theory.
\newblock {\em J. Phys. USSR}, 1:229--241, 1939.

\bibitem[Sta02]{Stade2002}
Eric Stade.
\newblock Archimedean {$L$}-factors on {${\rm GL}(n)\times{\rm GL}(n)$} and
  generalized {B}arnes integrals.
\newblock {\em Israel J. Math.}, 127:201--219, 2002.

\bibitem[Wal92]{Wallach1992}
Nolan~R. Wallach.
\newblock {\em Real reductive groups. {II}}, volume 132 of {\em Pure and
  Applied Mathematics}.
\newblock Academic Press Inc., Boston, MA, 1992.

\end{thebibliography}

\end{document}